\renewcommand\eqref[1]{(\ref{#1})} %Need with hyperref
\numberwithin{equation}{section}
\newtheorem{theorem}{Theorem}[section]
\newtheorem{remark}{Remark}
\newtheorem{defin}{Definition}[section]
\numberwithin{equation}{section}
\numberwithin{equation}{section}
\begin{document}

\title{ON PROJECTION FROM $A^p_\omega$ TO THE HARDY SPACES $H^p$}         % Enter your title between curly braces

\author[A. Jerbashian]{Armen Jerbashian}
\address{Armen Jerbashian:
	\endgraf
	Institute of Mathematics 
	\endgraf
	National Academy of Sciences of Armenia
	\endgraf
	24/5 Marshal Baghramian Ave., Yerevan, 0019
	\endgraf
	Republic of Armenia
		\endgraf 
	{\it E-mail address} {\rm armen\_jerbashian@yahoo.com}
}

\author[J. E. Restrepo]{Joel E. Restrepo}
\address{
	Joel E. Restrepo:
	\endgraf
	Department of Mathematics
	\endgraf 
	Cinvestav
	\endgraf
	Av. IPN 2508, Col. San Pedro Zacatenco, Mexico city, 07360.
	\endgraf
	Mexico
	\endgraf
	{\it E-mail address} {\rm joel.restrepo@cinvestav.mx}
}

\date{}          % Enter your date or \today between curly braces

\maketitle

\begin{abstract}
This paper describes the known results on the projection from the most general holomorphic spaces $A^p_\omega$, which depend on a functional parameter $\omega$ and are over the unit disc, upper half-plane and the finite complex plane, to the classical Hardy spaces $H^p.$ The paper can be considered as a survey in the mentioned topic. A new result on the projection of the half-plane $A^p_\omega$ to the half-plane Hardy space $H^p$ is obtained.
\end{abstract}

\keywords{Keywords: Projection operator, holomorphic spaces, Hardy spaces.}

\tableofcontents

\section{Introduction}

We start by the sources of the theory of spaces $A_\omega^p$, which are the natural analogs of the classical Hardy spaces $H^p$, and the known results on the projection of the spaces $A^p_\omega$ to $H^p$, the main part of which can be found in the monograph \cite{[mon]}. Note that the space of holomorphic functions, the squares of modules of the derivatives of which are summable over the unit disc ${\mathbb D}\equiv\{z\in\mathbb{C}:|z|<1\}$, was considered, perhaps for the first time, in a paper of L. Biberbach \cite{CV-[4]} (1914), where approximations by rational functions are studied (see also pp. 150-151 in the book of J.L. Walsh \cite{CV-[5]}). In 1932, W. Wirtinger \cite{CV-[6]} studied approximations in the space $H_2^\prime$ (nowadays denoted $A^2_0$) of holomorphic in ${\mathbb D}$ functions $f$ which themselves satisfy the square summability condition
$$
A^2_0\;\;(\equiv H_2^\prime):\|f\|^2=\iint_{\mathbb D}|f(z)|^2dS<+\infty,
$$
where $dS$ is the Lebesgue area measure. In \cite{CV-[6]}, in particular, it is proved the representation formula for the functions $f\in A^2_0$ and the orthogonal projection from the same type Lebesgue space $L^2_0$ to $A^2_0$ is found (see also \cite[Theorem 20]{CV-[5]}). 

After that, in 1936, A. Ghika and S. Bergman considered the space $A^2_0$ in some multiply connected domains. S. Bergman could  establish there an analog of Wirtinger's result, but the nature of these domains permitted him to prove only the existence of the corresponding reproducing kernels (see \cite[Pages 150-151]{CV-[5]} and \cite{1950}).

Later, considering the unweighted spaces $A^p_0$ with $p\neq2$ over $\mathbb D$ V.P.  Zakaryuta and V.I. Yudovich \cite{ZY} (1964) extended Wirtinger's projection theorem to case $1<p<+\infty$. Also, they revealed the form of bounded linear functionals over $A^p_0$  $(1<p<+\infty)$ and show that the dual space of $A^p_0$ is $A^q_0$ $(1/p+1/q=1)$ in the sense of isomorphism. In W. Rudin's books \cite{Rud1} (1969) and \cite{Rudin2} (1980) the same was done in the polydisc and the multidimensional unit ball of ${\mathbb C}^n$, where the extension has some explicit forms of kernels, evident in view of the results of W. Wirtinger and V.P.  Zakaryuta -- V.I. Yudovich. In Rudin's books, the extension of Wirtinger's result (i.e. \cite[Theorem 20]{CV-[5]}) was called ``Bergman projection", and after that many researchers are attributing this term and similar ones to any result on regular functions with summable modulus over the area of a complex domain. This misunderstanding is causing a great confusion on the origins of the spaces $A^p$ in the contemporary literature.
 
Coming to weighted classes and spaces of functions in $\mathbb D$, note that in 1945 M.M. Djrbashian \cite{CV-[1]} (see also the detailed and complemented version \cite{CV-[2]} of 1948) improved R. Nevanlinna's result of 1936 (see \cite{CV-[3]}, Sec. 216) on the density of zeros and poles of functions $f$ meromorphic in $\mathbb D$, for which the Riemann--Liouville fractional integral of the growth characteristic $T(r,f)$ is bounded. The improvement was a detailed $\text{\rm factorization}$ formula for meromorphic in $\mathbb D$ functions of the considered classes. The same works \cite{CV-[1], CV-[2]} (the English translation of \cite{CV-[2]} is given in Addendum to the monograph  \cite{[mon]}) contain also the construction of the theory of similar Hardy type spaces $H^p(\alpha)$ (known as $A^p_\alpha$ nowadays) of holomorphic in $\mathbb{D}$ functions for which the Riemann-Liouville fractional primitive of the integral means $M_p(r,f)$ is bounded:
$$
A^p_\alpha\;(\equiv H^p(\alpha)):\int_0^1(1-r)^\alpha M_p(r,f)rdr\equiv\frac1{2\pi}\iint_{|\zeta|<1}\!\!(1-|\zeta|)^\alpha|f(\zeta)|^p d\sigma(\zeta)<+\infty,
$$
where $\alpha\in(-1,+\infty)$ and $p\in[1,+\infty)$ are any fixed numbers and $d\sigma$ is for Lebesgue's area measure. Besides, we recall the monograph by A.E. Djrbashian and F.A. Shamoyan \cite{CV-[15]} (1988), where the Wirtinger--Djrbashian orthogonal projection $\text{\rm theorem}$ for $A^2_\alpha$ is extended to the spaces $A^p_\alpha$ $(-1<\alpha<+\infty)$ and M.M. Djrbashian's theory of $A^p_\alpha$ spaces is complemented by numerous new results. It is to be mentioned that the results on weighted spaces of in a sense  regular, area-integrable over the unit disc functions remain of considerable interest, since they find development and application in numerous contemporary investigations, some of which are described in \cite{survey}.

The basic theory of the general, functional parameter, weighted spaces $A^p_\omega$ of holomorphic functions integrable over the areas of the unit disc, half-plane and the finite complex plane, is created in the articles \cite{CV, CMFT} and summarized in the recent monograph \cite{[mon]}. In particular, \cite{[mon]} contains some representations of functions from $A^p_\omega$ by means of some Cauchy type integrals taken over the areas of the domains, also some other integral representations over the unit circle and the real axis are given, which for $p=2$ define isometries between $A^2_\omega$ and the Hardy spaces $H^2$ over the unit disc $\mathbb D$ and upper half-plane $G^+=\{z\in\mathbb{C}:\hbox{\rm Im }z>0\}$.

The new result established in this paper is on the projection from $A_\omega^p$ over $G^+$ $(1\leqslant p<+\infty)$ to the corresponding Hardy space $H^p$, which complements the results of \cite{CV-[6],CV-[1], CV-[2],ZY,CV-[15], CV, CMFT} regarding the 
projections from $A_\omega^p$ to $H^p$ and fills a gap in the theory.

The paper is organized as follows: In Section \ref{unit-disc-s}, we recall the main results on the projection operator from $A_\omega^p(\mathbb{D})$ to $H^p(\mathbb{D})$. Here we provide some new examples to give more clarity and scope of the results. In Section \ref{finite-complex-plane}, we give the statements which are established for the projection from $A_\omega^p$ to $H^p$ in the finite complex plane. Some examples are discussed as well. In the last Section \ref{semi}, we prove the new results on the projection operator from $A_\omega^p(G^+)$ to $H^p(G^+)$. In this case, the injectivity of the operator $L_{\omega_1}$ in $G^+$ (see formula \eqref{ope-g+}) and a special bound of the kernel $C_{\omega}$ (\cite[Lemma 3.1]{CMFT-16}) play an important role. Some illustrative examples are also given.

\section{Spaces $A^p_\omega({\mathbb D})$ of functions holomorphic in the unit disc}\label{unit-disc-s}

\noindent We start by recalling the definition of the spaces $A^p_\omega({\mathbb D})$ \cite{CV} (see also \cite[Section 2]{[mon]}). First, $A_\omega^p({\mathbb D})$ is the set of all those functions $f$ holomorphic in $\mathbb D$, for which
\begin{equation}\label{CV-e1.1}
\|f\|_{p,\omega}=\biggl\{{1\over 2\pi}\iint_{|\zeta|<1} |f(\zeta)|^p |d\mu_\omega(\zeta)|\biggr\}^{1/p}<+\infty,\quad 0<p<+\infty,
\end{equation}
where $d\mu_\omega(\rho e^{i\vartheta})=-d\omega(\rho^2)d\vartheta$ and $\omega$ is a functional parameter from the class $\Omega_A({\mathbb D})$ defined as follows. 

\begin{defin}\label{defOmega_A}
$\Omega_A({\mathbb D})$ is the class of functions $\omega$ in $[0,1]$, such that:
\begin{itemize}
\item[{(i)}] \quad$0<\bigvee\limits_\delta^1\omega<+\infty$\quad for any $\delta\in[0,1)$,
\item[{(ii)}]  \quad${\Delta_n\equiv\Delta_n(\omega)=-\int_0^1 t^n d\omega(t)\neq0,\quad n=1,2,\dots,}$
\medskip\item[{(iii)}]  \quad$\displaystyle{\liminf_{n\to\infty}\root n\of{|\Delta_n|}\geqslant1}$.
\end{itemize}
$L^p_{\omega}({\mathbb D})$ is the Lebesgue space of measurable functions defined solely by \eqref{CV-e1.1}.
\end{defin}
To functions $f$ given in $\mathbb D$, we apply the M.M. Djrbashian fractional integration of the form
\begin{equation}\label{operatorD}
L_\omega f(z)=-\int_0^1f(tz)d\omega(t),\quad z\in{\mathbb D}.
\end{equation}
The operator $L_\omega$ is a modification (see in \cite{CV-[14]} or \cite[Lemma 1.1]{[mon]}) of the same operator introduced by M.M. Djrbashian \cite{CV-[11]} (see also in \cite{CV-[13]}). In some particular cases, $L_\omega$ becomes the classical integral operators of Riemann--Liouville, Hadamard \cite{Hadamard}, Erd\'elyi \cite{Erdely} -- Kober \cite{Kober} and many other operators.

By $H^p({\mathbb D})$, we denote the Hardy space of functions $f$ holomorphic in ${\mathbb D}$:
$$H^p({\mathbb D}):\quad\|f\|_{H^p({\mathbb D})}=\sup_{0<r<1}\bigg\{\int_0^{2\pi}\big|f(re^{i\vartheta})\big|^pd\vartheta\bigg\}^{1/p}<+\infty,\quad 0<p<+\infty.$$
Assuming that $\omega\in\Omega_A({\mathbb D})$, we use the Cauchy-Djrbashian  $\omega$-kernel \cite{CV-[11],CV-[13]}
$$C_\omega(z)=\sum_{n=0}^{+\infty}\frac{z^n}{\Delta_n(\omega)},\quad \Delta_n(\omega)=n\int_0^1x^{n-1}\omega(x)dx,\quad z\in{\mathbb D},$$
where
$$\Delta_n(\omega)=-\int_0^1x^{n}d\omega(x),\quad n=0,1,\ldots,\quad{\rm if}\quad\omega(1)=0.$$
Note that for any $\omega\in\Omega_A({\mathbb D})$ the holomorphicity of $C_\omega$ in $\mathbb D$ is proved in  \cite[Theorem 1.4]{CV}. Besides, we can check that 
\[
L_\omega C_\omega(z)=\frac1{1-z},\quad z\in{\mathbb D},
\]
and, if $\omega\in\Omega_A({\mathbb D})$, then $\displaystyle\lim_{n\to+\infty}\root n\of{|\Delta_n|}=1$, and hence the application of $L_\omega$ to any function $f$ holomorphic in $\mathbb D$ does not change the convergence radius of its Taylor series expansion and means just multiplication of the coefficients by $\Delta_n(\omega)$, i.e.
$${\rm if}\quad f(z)=\sum_{n=0}^{+\infty}a_kz^k\quad (z\in{\mathbb D}),\quad {\rm then}\quad
L_\omega f(z)=\sum_{n=0}^{+\infty}a_n\Delta_n(\omega)z^n\quad (z\in{\mathbb D}),$$
while application of the converse operator $L_\omega^{-1}$ means devision of the coefficients by $\Delta_n(\omega)$, and again the convergence radius does not change. More details about the existence and even on the form of the operator $L_\omega^{-1}$ can be found in \cite{CV,[mon]}.

Also, for any $p>0,$ the union $\bigcup_{\omega\in\Omega_A({\mathbb D})}A^p_\omega({\mathbb D})$ coincides with the set of all functions holomorphic in $\mathbb D$.

If $f\in A_\omega^p({\mathbb D})$ $(1\leqslant p<+\infty)$ for some $\omega\in\Omega_A({\mathbb D})$, then the following representations are true at any point $z\in\mathbb D$ \cite[Theorem 1.4]{CV}:
\begin{align}
f(z)&={1\over 2\pi}\iint_{|\zeta|<1} f(\zeta) C_\omega(z\overline\zeta) d\mu_\omega(\zeta)\label{D3}\\
&=-\overline{f(0)}+{1\over\pi}\iint_{|\zeta|<1} \big\{{\hbox{\rm Re }}f(\zeta)\big\}\, C_\omega(z\overline\zeta) d\mu_\omega(\zeta).\nonumber
\end{align}
In \cite[Theorem 3.1]{CV}, it is also proved that for $p=2$ formula 
$$f(z)={1\over 2\pi}\iint_{|\zeta|<1}F(\zeta) C_\omega(z\overline\zeta) d\mu_\omega(\zeta),\quad F\in L^2_\omega({\mathbb D}),$$
defines the orthogonal projection from $L^2_\omega({\mathbb D})$ to $A^2_\omega({\mathbb D})$.

So, the associated Cauchy type kernel of the space $A^p_\omega({\mathbb D})$ is $C_\omega$, and when $\omega(t)=(1-t)^\alpha$ $(\alpha>0)$, then $C_\omega(z)=(1-z)^{-(1+\alpha)}$. Besides, in \cite[Theorem 1.5]{CV}, it is established that a similar representation formula for functions of $A^2_\omega({\mathbb D})$ gives an isometry with the Hardy space $H^2({\mathbb D})$. 

\begin{theorem}\label{CV-t1.2} 
{\bf (\cite[Theorem 1.5]{CV})}
Let $\widetilde\omega\in\Omega_A({\mathbb D})$ be continuously differentiable, nonincreasing in $(0,1]$, $\widetilde\omega(1)=0$ and $\widetilde\omega(0)=1$, and let $\omega$ be the Volterra square of $\widetilde\omega$, i.e.
$$\omega(x)=-\int_x^1\widetilde\omega\Bigl({x\over\sigma}\Bigr) d\widetilde\omega(\sigma),\quad 0<x<1.
$$
Then $\omega\in\Omega_A({\mathbb D})$, and $A_{\omega}^2({\mathbb D})$ coincides with the set of functions representable as
\begin{equation}\label{CV-e1.5}
f(z)={1\over 2\pi}\int_0^{2\pi}C_{\widetilde\omega}(ze^{-i\vartheta})\varphi(e^{i\vartheta})d\vartheta,\quad z\in{\mathbb D},\;\;\varphi(e^{i\vartheta})\in L^2[0,2\pi].
\end{equation}
For any $f\in A_{\omega}^2({\mathbb D})$, there exists a unique function $\varphi_0$ of the Hardy space $H^2({\mathbb D})$, such that \eqref{CV-e1.5} is true with $\varphi_0(e^{i\vartheta})$. This function can be deduced by the formula
$$\varphi_0(z)=L_{\widetilde\omega} f(z)=-\int_0^1 f(tz) d\widetilde\omega(t),\quad z\in{\mathbb D}.$$
Besides, $\|\varphi_0\|_{H^2({\mathbb D})}=\|f\|_{2,\omega}$ and $\varphi-\varphi_0\perp H^2({\mathbb D})$ for any $\varphi(e^{i\vartheta})\in L^2[0,2\pi]$ for which \eqref{CV-e1.5} is true. The operator $L_{\widetilde\omega}$ is an isometry $A^2_{\omega}({\mathbb D})\to H^2({\mathbb D})$, and the integral of \eqref{CV-e1.5} defines $L_{\widetilde\omega}^{-1}$ on $H^2({\mathbb D})$.
\end{theorem}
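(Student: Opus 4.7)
The strategy is to reduce the theorem to a Parseval-type identity for Taylor coefficients. The pivotal algebraic fact is that the Volterra square forces the moments to multiply: $\Delta_n(\omega)=[\Delta_n(\widetilde\omega)]^2$ for every $n\geqslant 0$. I would derive this by integration by parts in $\Delta_n(\omega)=-\int_0^1 x^n\,d\omega(x)$ (using $\omega(1)=0$, which is immediate since the defining integral of $\omega$ at $x=1$ has coincident limits), then inserting the explicit formula for $\omega$, interchanging the double integral by Fubini, and making the substitution $t=x/\sigma$ in the inner integral. The inner integral reduces to $\Delta_n(\widetilde\omega)/n$ and the outer to $\Delta_n(\widetilde\omega)$, giving the claimed factorization.

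Once this identity is in hand, verifying $\omega\in\Omega_A(\mathbb{D})$ is routine: (ii) is clear, and (iii) follows from $\liminf_{n\to\infty}\sqrt[n]{|\Delta_n(\omega)|}=\bigl(\liminf_{n\to\infty}\sqrt[n]{|\Delta_n(\widetilde\omega)|}\bigr)^2\geqslant 1$; (i) is inherited from the continuous differentiability and monotonicity of $\widetilde\omega$ through the defining formula. Integrating in polar coordinates and using orthogonality of $\{e^{in\vartheta}\}$, I would obtain $\|f\|_{2,\omega}^2=\sum_{n\geqslant 0}|a_n|^2\Delta_n(\omega)=\sum_{n\geqslant 0}|a_n\Delta_n(\widetilde\omega)|^2$ for $f(z)=\sum a_n z^n\in A^2_\omega(\mathbb{D})$.

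Next, I would analyze \eqref{CV-e1.5}. Given $\varphi\in L^2[0,2\pi]$ with Fourier series $\sum_{k\in\mathbb{Z}}c_k e^{ik\vartheta}$, plugging the defining series of $C_{\widetilde\omega}$ into \eqref{CV-e1.5} and integrating termwise (justified for each fixed $z\in\mathbb{D}$ by uniform convergence of the $C_{\widetilde\omega}$ series on $|z|\leqslant r<1$) kills all negative Fourier modes and produces $f(z)=\sum_{n\geqslant 0} c_n z^n/\Delta_n(\widetilde\omega)$. Hence $f\in A^2_\omega(\mathbb{D})$ iff $\sum_{n\geqslant 0}|c_n|^2<+\infty$, by the norm identity of the previous paragraph. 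Setting $\varphi_0(e^{i\vartheta})=\sum_{n\geqslant 0}c_n e^{in\vartheta}$ gives the unique Hardy-class representative, and it coincides with $L_{\widetilde\omega}f$ because $L_{\widetilde\omega}$ acts on Taylor coefficients by multiplication by $\Delta_n(\widetilde\omega)$. Parseval on the unit circle then yields $\|\varphi_0\|_{H^2(\mathbb{D})}^2=\sum|c_n|^2=\|f\|_{2,\omega}^2$, and $\varphi-\varphi_0$ carries only strictly negative-frequency modes, hence $\perp H^2(\mathbb{D})$. The converse reading of \eqref{CV-e1.5} recovers $f$ from $\varphi_0\in H^2(\mathbb{D})$ with coefficients divided back by $\Delta_n(\widetilde\omega)$, so $L_{\widetilde\omega}$ and the $C_{\widetilde\omega}$-integral are mutual inverses and \eqref{CV-e1.5} defines $L_{\widetilde\omega}^{-1}$ on $H^2(\mathbb{D})$.

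The main obstacle is the careful bookkeeping around the two termwise integrations: one needs to justify the interchange of sum and integral both in the polar-coordinate evaluation of $\|f\|_{2,\omega}^2$ and in the computation of \eqref{CV-e1.5}, and then show that every Taylor series with $\sum|a_n\Delta_n(\widetilde\omega)|^2<+\infty$ actually yields a function in $A^2_\omega(\mathbb{D})$. This last converse direction is again Parseval, but run in reverse, and requires a monotone/Fatou step to pass from circular integrals at radius $r<1$ to the area integral defining $\|\cdot\|_{2,\omega}$; the hypotheses $\widetilde\omega(0)=1$, $\widetilde\omega(1)=0$ and nonincreasing $\widetilde\omega$ are precisely what make the total-variation measure $|d\mu_\omega|$ tractable in this limit.
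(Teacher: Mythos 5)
Your proposal is correct: the moment identity $\Delta_n(\omega)=[\Delta_n(\widetilde\omega)]^2$ for the Volterra square, combined with the polar-coordinate Parseval evaluation of $\|f\|_{2,\omega}$ and the fact that $L_{\widetilde\omega}$ acts on Taylor coefficients by multiplication by $\Delta_n(\widetilde\omega)$, is exactly the mechanism behind this theorem. The present paper only recalls the statement from \cite[Theorem 1.5]{CV} without reproducing its proof, but your argument follows the same coefficient-multiplier route as the original, so there is nothing essentially different to compare.
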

The next theorem presents a result on the projection from $A_\omega^p(\mathbb{D})$ to $H^p(\mathbb{D})$ for any $1\leqslant p<+\infty$. 
\begin{theorem}\label{t1} {\bf (\cite[Theorem 5.5]{CV})}
Let $1\leqslant p<+\infty$ and let $\omega\in\Omega_A({\mathbb D})$ be a monotone function. Then:
\begin{itemize}
\item[{$1^\circ$.}]\; Any function $f(z)\in A_\omega^p({\mathbb D})$ is representable in the form
\begin{equation}\label{d3}
f(z)={1\over2\pi}\int_0^{2\pi}C_{\omega_1}(ze^{-i\vartheta})\varphi(e^{i\vartheta})d\vartheta,\quad z\in{\mathbb D},
\end{equation}
where $\omega_1(x)=\omega(x^2)$ $(0\leqslant x\leqslant 1)$ and $\varphi=L_{\omega_1}f\in H^p({\mathbb D})$ with the integral operator of the form \eqref{operatorD}. 
\medskip\item[{$2^\circ$.}] \;
The linear operator $L_{\omega_1}:A_\omega^p(\mathbb{D})\to H^p(\mathbb{D})$ is bounded such that $\|L_{\omega_1}\|\leqslant 1,$ and \eqref{d3} represents $L^{-1}_{\omega_1}$ in the set $L_{\omega_1}A_\omega^p({\mathbb D})\,(\subset H^p({\mathbb D}))$.
\end{itemize}
\end{theorem}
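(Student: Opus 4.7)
The plan is to set $\varphi:=L_{\omega_1}f$ and reduce the theorem to two checkpoints: (a) $\varphi$ lies in $H^p(\mathbb{D})$ with $\|\varphi\|_{H^p}\leqslant\|f\|_{p,\omega}$, and (b) the boundary integral on the right of \eqref{d3} with this $\varphi$ reproduces $f$. Injectivity of $L_{\omega_1}$, which on Taylor expansions acts as multiplication by the nonzero sequence $\{\Delta_n(\omega_1)\}_{n\geqslant 0}$, then glues the two together, and both $1^\circ$ and $2^\circ$ follow at once.

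For (a), the key observation is that the monotonicity of $\omega$ makes $\omega_1(x)=\omega(x^2)$ monotone as well, so $|d\omega_1|$ is a positive finite Borel measure on $[0,1]$; moreover, the identity $\omega_1(t)=\omega(t^2)$ identifies the radial part of $|d\mu_\omega|$ with $|d\omega_1(t)|\,d\vartheta$, so that $\|f\|_{p,\omega}^p$ may be written as the $|d\omega_1|$-integral of the $p$-th power of the circular $L^p$-means of $f$. I would then start from $L_{\omega_1}f(re^{i\vartheta})=-\int_0^1 f(tre^{i\vartheta})\,d\omega_1(t)$, apply Minkowski's integral inequality in $L^p(d\vartheta)$ to pull the $t$-integral outside, use that the circular $L^p$-mean of $f$ is nondecreasing in the radius (subharmonicity of $|f|^p$) to replace $tr$ by $t$, and close by Jensen's inequality applied with the convex function $x\mapsto x^p$ against the normalised measure $|d\omega_1|$; this converts the first-power integral produced by Minkowski into the $p$-th power integral equalling $\|f\|_{p,\omega}^p$.

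For (b), I would set $g(z):=\frac{1}{2\pi}\int_0^{2\pi}C_{\omega_1}(ze^{-i\vartheta})\varphi(e^{i\vartheta})\,d\vartheta$, which is holomorphic in $\mathbb{D}$ as the integral of a holomorphic kernel against an $L^p$-function, apply $L_{\omega_1}$ to $g$, interchange the two integrals by Fubini (valid since $C_{\omega_1}$ is uniformly bounded on compact subsets of $\mathbb{D}$ and $\varphi\in L^p[0,2\pi]$ with $p\geqslant 1$), and invoke the already-established identity $L_{\omega_1}C_{\omega_1}(\lambda)=(1-\lambda)^{-1}$. This yields $L_{\omega_1}g(z)=\frac{1}{2\pi}\int_0^{2\pi}\varphi(e^{i\vartheta})/(1-ze^{-i\vartheta})\,d\vartheta=\varphi(z)$, where the last equality is the Cauchy integral representation of $H^p$-functions, valid for $p\geqslant 1$. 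Since $L_{\omega_1}g=L_{\omega_1}f$, injectivity gives $g=f$, proving \eqref{d3}; part $2^\circ$ is then immediate, with the operator bound being exactly (a) and \eqref{d3} exhibiting the inverse of $L_{\omega_1}$ on $L_{\omega_1}A^p_\omega(\mathbb{D})$. I expect the main technical hurdle to be (a): the Minkowski--Jensen chain must be executed in the correct order, and the decisive bookkeeping is to verify that the weight produced by $|d\omega_1|$ coincides, after the substitution $s=t^2$, with the radial part of $|d\mu_\omega|$, which is precisely what the choice $\omega_1(x)=\omega(x^2)$ is designed to guarantee.
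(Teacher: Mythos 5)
Your proposal is correct and follows essentially the same route as the paper's own template (the proof given for the half-plane analogue, Theorem \ref{tG+}): first bound $\|L_{\omega_1}f\|_{H^p({\mathbb D})}$ by $\|f\|_{p,\omega}$ via a Minkowski/Jensen chain exploiting that $|d\omega_1(\rho)|=|d\omega(\rho^2)|$ is exactly the radial part of $|d\mu_\omega|$, then reproduce $L_{\omega_1}f$ from its boundary values by the Cauchy integral, pull $L_{\omega_1}$ through the kernel using $L_{\omega_1}C_{\omega_1}(\lambda)=(1-\lambda)^{-1}$, and conclude $f=g$ from the injectivity of $L_{\omega_1}$ (multiplication of Taylor coefficients by the nonzero $\Delta_n(\omega_1)$). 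The only caveat is that your Jensen step against the \emph{normalised} measure actually yields $\|L_{\omega_1}\|\leqslant\bigl(\bigvee_0^1\omega\bigr)^{(p-1)/p}$, so the stated bound $\|L_{\omega_1}\|\leqslant1$ requires the implicit normalisation $\bigvee_0^1\omega=1$ (exactly as assumed explicitly in Theorem \ref{tE} and as reflected in the constant $\Delta_0^{p-1}$ of Theorem \ref{tG+}); this is a gap in the quoted statement rather than in your argument.
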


\subsection{Examples}
\medskip $1^\circ.$ If $1<p<+\infty$ and $\omega_1(t)=(1-t)^\alpha$ with $\alpha\geqslant1$, then 
$$C_{\omega_1}(z)\equiv(1-z)^{-(1+\alpha)}\quad{\rm and}\quad L_{\omega_1}f(z)\equiv\alpha\int_0^1(1-t)^{\alpha-1}f(tz)dt,$$
but 
$$\omega(t)=\alpha^{p}\int_t^1(1-x)^{(\alpha-1)p}dx=\frac{\alpha^{p}}{(\alpha-1)p+1}(1-t)^{(\alpha-1)p+1}.$$
Hence, formula \eqref{d3} is true for any function $f$ of the space $A^p_{\omega}({\mathbb D})$ having the associated Cauchy type kernel 
$$C_{\omega}(z)=\frac1{(1-z)^{(\alpha-1)p+2}},$$
while the order $1+\alpha$ of the Cauchy kernel in the representation \eqref{d3} can be much smaller than $(\alpha-1)p+2$, i.e. the representation \eqref{d3} can be much better than \eqref{D3}.

\medskip $2^\circ.$ Let $1<p<+\infty$, and let $\omega_1$ be a continuously differentiable, strictly monotone function in $[0,1]$. Then, the function 
$$\omega(t)=\int_t^1\;\big|\omega_1^\prime(\lambda)\big|^{p}d\lambda$$
is of the class $\Omega_A({\mathbb D})$, and any function $f\in A_\omega^p({\mathbb D})$ is representable in the form
\[
f(z)={1\over2\pi}\int_0^{2\pi}C_{\omega_1}(ze^{-i\vartheta})\varphi(e^{i\vartheta})d\vartheta,\quad z\in{\mathbb D},
\]
where $\varphi=L_{\omega_1}f\in H^p({\mathbb D})$ with the integral operator of the form \eqref{operatorD}. Besides, the linear operator $L_{\omega_1}:A_\omega^p(\mathbb{D})\to H^p(\mathbb{D})$ is bounded such that $\|L_{\omega_1}\|\leqslant 1,$ and \eqref{d3} represents $L^{-1}_{\omega_1}$ in the set $L_{\omega_1}A_\omega^p({\mathbb D})\,(\subset H^p({\mathbb D}))$.

\section{Spaces $A^p_\omega({\mathbb C})$ of entire functions}\label{finite-complex-plane}

In this section, we focus on the theory of the spaces of entire functions $A^p_\omega({\mathbb C})$, which is given in \cite{CV} (see also \cite[Section 3]{[mon]}). 

The spaces $A^p_\omega({\mathbb C})$ $(0<p<+\infty)$ are defined as the sets of entire functions $F$ for which
\begin{equation}\label{CV-e5.1}
\|F\|_{p,\omega}=\left\{{1\over2\pi}\iint_{\mathbb C}|F(\zeta)|^p|d\mu_\omega(\zeta)|\right\}^{1/p}<+\infty,
\end{equation}
where $d\mu_\omega(\rho e^{i\vartheta})=-d\omega(\rho^2)d\vartheta$ and $\omega\in\Omega_A({\mathbb C})$, i.e. $\omega$ is a strictly decreasing in the half-axis $[0,+\infty)$, such that $\omega(0)=1$ and
$$\Delta_n^\infty(\omega)=-\int_0^{+\infty}t^nd\omega(t)<+\infty\quad{\hbox{\rm for any}}\quad n=0,1,2\ldots$$
By $L^p_\omega({\mathbb C})$ we denote the corresponding Lebesgue space.  $A^p_\omega({\mathbb C})$ $(p\geqslant1)$ is a Banach space with the norm \eqref{CV-e5.1}, while $A^2_\omega({\mathbb C})$ is a Hilbert space. Furthermore, for any $p>0,$ the union $\bigcup_{\omega\in\Omega_A({\mathbb C})}A^p_\omega({\mathbb C})$ coincides with the set of all entire functions \cite[Lemma 5.1]{CV}. Besides, $A^{p_1}_\omega({\mathbb C})\subset A^{p_2}_\omega({\mathbb C})$ for $0<p_1<p_2<+\infty$.

\medskip\noindent If $\omega\in\Omega_A({\mathbb C})$ then 
$$\lim_{n\to\infty}\root n\of{\Delta_n(\omega)}=+\infty \;\;\text{for}\;\;\Delta_n(\omega)=-\int_0^{+\infty}t^n|d\omega(t)|,$$
and the Cauchy type kernel of M.M. Djrbashian \cite{1970}
$$C_\omega^{\infty}(z)=\sum_{n=0}^{+\infty}\frac{z^n}{\Delta_n^\infty(\omega)},\quad z\in {\mathbb C},$$
is an entire function. Further, considering the M.M. Djrbashian \cite{1970} fractional integration operator
$$L_\omega^\infty f(z)=-\int_0^{+\infty}f(tz)d\omega(t),\quad z\in{\mathbb C},$$
one can see that:
$$L_\omega C_\omega(z)=\frac1{1-z},\quad z\in{\mathbb D},$$
and, the application of $L_\omega^\infty$ to any entire function $f$ means just multiplication of its Taylor series coefficients by $\Delta_n(\omega)$, i.e.
$${\rm if}\quad f(z)=\sum_{n=0}^{+\infty} a_kz^k\;\;\; (z\in{\mathbb C}),\quad {\rm then}\quad
L_\omega f(z)=\sum_{n=0}^{+\infty} a_n\Delta_n(\omega)z^n\;\;\; (|z|<R\leqslant+\infty),$$
while as application of the converse operator $L_\omega^{-1}$ we mean devision of the coefficients by $\Delta_n^\infty(\omega)$, of course, for the set of entire functions for which the requirement $\liminf_{n\to\infty}\root n\of{|a_n|\Delta_n(\omega)}=1/R$ is fulfilled. So, $L_\omega^\infty$ is a one-to-one mapping of the mentioned set of entire functions to functions holomorphic in the disc $|z|<R$.  

If $f\in A_\omega^p({\mathbb C})$ $(2\leqslant p<+\infty)$ for some $\omega\in\Omega_A({\mathbb C})$, then the following representations of functions from $A^p_\omega({\mathbb C})$ are proved \cite[Theorem 5.2]{CV}:
\begin{align}
F(z)&={1\over2\pi}\iint_{\mathbb C}F(\zeta)C_\omega^\infty(z\overline\zeta)d\mu_\omega(\zeta)\label{REP}\\
&=-\overline{F(0)}+{1\over\pi}\iint_{\mathbb C}\big\{{\hbox{\rm Re }}F(\zeta)\big\}C_\omega^\infty(z\overline\zeta)d\mu_\omega(\zeta),\quad z\in{\mathbb C}.\nonumber
\end{align}
\begin{remark}
Note that proving a representation like \eqref{REP} for functions $f\in A_\omega^p({\mathbb C})$ with $1\leqslant p<2$ is still an open problem, while the results  obtained on the projection are about projection operators from $A_\omega^p({\mathbb C})$ to Hardy spaces $H^p({\mathbb D})$ over the unit disc.
\end{remark}
The orthogonal projection of $L_\omega^2({\mathbb C})$ onto $A_\omega^2({\mathbb C})$ is of the form \cite[Theorem 5.3]{CV}
\[
P_\omega f(z)={1\over 2\pi}\iint_{\mathbb C}f(\zeta) C_\omega^\infty(z\overline\zeta)
d\mu_\omega(\zeta),\quad z\in{\mathbb C},\,\,\, f\in L_\omega^2({\mathbb C}).
\]
In addition, the below theorem with a similar representation formula for functions of $A^2_\omega({\mathbb C})$ is  proved in \cite[Theorem 5.4]{CV}, which gives an isometry of $A^2_\omega({\mathbb C})$ with the Hardy space $H^2({\mathbb D})$.

\begin{theorem}\label{CV-t5.3}
Let $\omega\in\Omega_A({\mathbb C})$ be continuously differentiable in $[0,+\infty)$ and such that $\omega(+\infty)=0$, $\omega^\prime<0$ and is bounded on $[0,+\infty)$ and $$-\infty<\int_0^{+\infty}t^{-1}d\omega(t)<0.$$
Then, the function
$$\widetilde\omega(x)=-\int_0^{+\infty}\omega\Bigl({x\over t}\Bigr)d\omega(t),\quad 0<x<+\infty,$$
belongs to $\Omega_A({\mathbb C})$, and $A_{\tilde\omega}^2({\mathbb C})$ coincides with the set of all functions representable in the form
\begin{equation}\label{CV-e5.7}
F(z)={1\over 2\pi}\int_0^{2\pi}\varphi(e^{i\vartheta}) C_\omega^\infty(ze^{-i\vartheta}) d\vartheta,\quad z\in{\mathbb C},\;\;\;\varphi(e^{i\vartheta})\in L^2[0,2\pi].
\end{equation}
For any $F\in A_{\tilde\omega}^2({\mathbb C})$, there exists a unique function $\varphi_0$ of the ordinary Hardy space $H^2({\mathbb D})$, such that \eqref{CV-e5.7} is true with $\varphi_0(e^{i\vartheta})$. This function can be found by the formula
$$\varphi_0(z)=L_\omega^\infty F(z)\equiv-\int_0^{+\infty}F(tz)d\omega(t),\quad z\in{\mathbb D}.$$
Besides, $\|\varphi_0\|_{H^2({\mathbb D})}=\|F\|_{2,\tilde\omega}$ and $\varphi-\varphi_0\perp H^2$ for any 
$\varphi(e^{i\vartheta})\in L^2[0,2\pi]$ for which \eqref{CV-e5.7} is true.  The operator $L_\omega^\infty$ is an isometry $A^2_{\tilde\omega}({\mathbb C})\to H^2({\mathbb D})$, and the integral \eqref{CV-e5.7} defines $(L_\omega^\infty)^{-1}$ on $H^2({\mathbb D})$.
\end{theorem}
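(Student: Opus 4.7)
The plan is to reduce the entire statement to a Taylor/Fourier coefficient comparison, in the spirit of Theorem~\ref{CV-t1.2} for the unit disc. The linchpin is the moment identity
\[
\Delta_n^\infty(\widetilde\omega)=\big[\Delta_n^\infty(\omega)\big]^2,\qquad n=0,1,2,\ldots,
\]
which says that the Volterra square $\widetilde\omega$ is exactly the weight for which $L_\omega^\infty$ becomes an isometry. Once this identity is secured, rotation invariance of $|d\mu_{\widetilde\omega}|$ makes $\{z^n/\sqrt{\Delta_n^\infty(\widetilde\omega)}\}_{n\geqslant 0}$ an orthonormal basis of $A^2_{\widetilde\omega}({\mathbb C})$, and the remainder is coefficient bookkeeping plus Parseval.

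In detail, I will proceed as follows. First, I verify $\widetilde\omega\in\Omega_A({\mathbb C})$. The endpoint values $\widetilde\omega(0)=\omega(0)^2=1$ and $\widetilde\omega(+\infty)=0$ are immediate from the definition and the hypotheses $\omega(0)=1$, $\omega(+\infty)=0$. Differentiating under the integral sign gives $\widetilde\omega'(x)=-\int_0^{+\infty}t^{-1}\omega'(x/t)\omega'(t)\,dt$, which is strictly negative because $\omega'<0$ is bounded; so $\widetilde\omega$ is strictly decreasing. The moment identity then follows by substituting this formula into $\Delta_n^\infty(\widetilde\omega)=\int_0^{+\infty}x^n(-\widetilde\omega'(x))\,dx$, applying Tonelli (all factors positive), making the substitution $u=x/t$, and recognizing $\int_0^{+\infty}u^n\omega'(u)\,du=-\Delta_n^\infty(\omega)$; the auxiliary hypothesis $\int_0^{+\infty}t^{-1}\,d\omega(t)>-\infty$ is used to control convergence near the origin. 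Second, inserting the Taylor expansion $C_\omega^\infty(ze^{-i\vartheta})=\sum_{n\geqslant 0}z^n e^{-in\vartheta}/\Delta_n^\infty(\omega)$ into \eqref{CV-e5.7} and integrating termwise against $\varphi$ produces
\[
F(z)=\sum_{n=0}^{+\infty}\frac{c_n}{\Delta_n^\infty(\omega)}\,z^n,
\]
where $c_n=\frac{1}{2\pi}\int_0^{2\pi}\varphi(e^{i\vartheta})e^{-in\vartheta}\,d\vartheta$ is the $n$-th Fourier coefficient of $\varphi$; so only the $H^2$ part $\varphi_0(z)=\sum_{n\geqslant 0}c_n z^n$ survives, and $\varphi-\varphi_0\perp H^2({\mathbb D})$ is automatic. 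Third, orthogonality of the monomials in $A^2_{\widetilde\omega}({\mathbb C})$ combined with $\|z^n\|_{2,\widetilde\omega}^2=\Delta_n^\infty(\widetilde\omega)=[\Delta_n^\infty(\omega)]^2$ gives
\[
\|F\|_{2,\widetilde\omega}^{\,2}=\sum_{n=0}^{+\infty}|c_n|^2=\|\varphi_0\|_{H^2({\mathbb D})}^{\,2},
\]
up to the normalization implicit in the norms, which is the claimed isometry. Fourth, since $L_\omega^\infty$ multiplies Taylor coefficients by $\Delta_n^\infty(\omega)$, we obtain $L_\omega^\infty F=\sum_{n\geqslant 0}c_n z^n=\varphi_0$ at once. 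Surjectivity is the converse statement: any $F(z)=\sum b_n z^n\in A^2_{\widetilde\omega}({\mathbb C})$ satisfies $\sum|b_n|^2[\Delta_n^\infty(\omega)]^2<+\infty$, which says precisely that $\varphi_0(z)=\sum b_n\Delta_n^\infty(\omega)z^n\in H^2({\mathbb D})$, and $F$ is recovered from $\varphi_0$ via \eqref{CV-e5.7}.

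The step I expect to be the main obstacle is the careful verification that $\widetilde\omega\in\Omega_A({\mathbb C})$, and specifically the justification of the Tonelli exchange producing the moment identity, since this is where the technical hypotheses on $\omega$ (continuous differentiability, boundedness of $\omega'$, and absolute convergence at the origin) must all be invoked together. A secondary technical point is the interchange of series and integration in \eqref{CV-e5.7} at a general $z\in{\mathbb C}$: because $C_\omega^\infty$ is entire, its Taylor series converges uniformly in $\vartheta$ for each fixed $z$, and Cauchy--Schwarz together with Parseval's identity for $\varphi$ bounds the tail. The remainder of the argument is a routine coefficient calculation.
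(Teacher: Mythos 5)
The paper does not prove this theorem; it is recalled verbatim from \cite{CV} (Theorem 5.4 there), so there is no in-text proof to compare against. Your argument is correct and is essentially the canonical one for these Volterra-square isometry theorems: the substitution $u=x/t$ under Tonelli does yield $\Delta_n^\infty(\widetilde\omega)=[\Delta_n^\infty(\omega)]^2$, the rotation invariance of $-d\widetilde\omega(\rho^2)d\vartheta$ makes the monomials orthogonal with $\|z^n\|_{2,\widetilde\omega}^2=\Delta_n^\infty(\widetilde\omega)$, and the remaining assertions (uniqueness of $\varphi_0$, $\varphi-\varphi_0\perp H^2$, surjectivity, and the identification of $L_\omega^\infty$ as coefficient multiplication) all follow from the coefficient bookkeeping exactly as you describe, with the entirety of the represented $F$ guaranteed by $\root n\of{\Delta_n^\infty(\omega)}\to+\infty$.
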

For the projection from $A_\omega^p(\mathbb{C})$ to $H^p(\mathbb{D})$ with any $1\leqslant p<+\infty$, the following statement is true.
\begin{theorem}\label{tE} {\bf (\cite[Theorem 5.5]{CV})}
Let $1\leqslant p<+\infty$, let $\omega\in\Omega_A({\mathbb C})$ and let $\bigvee_0^{+\infty}\omega=1$. Then any function $F\in A^p_\omega({\mathbb C})$ is representable in the form
\begin{equation}\label{CV-e5.8}
F(z)={1\over2\pi}\int_0^{2\pi}C_{\omega_1}^\infty(ze^{-i\vartheta})\varphi(e^{i\vartheta})d\vartheta,\quad z\in{\mathbb C},
\end{equation}
where $\omega_1(x)=\omega(x^2)$ and $\varphi=L_{\omega_1}^\infty F\in H^p({\mathbb D})$. In $A^p_\omega({\mathbb C})$ we have $\|L_{\omega_1}^\infty\|\leqslant1$, and \eqref{CV-e5.8} represents $\big(L^\infty_{\omega_1}\big)^{-1}$ in the set $L_{\omega_1}^\infty  A^p_\omega({\mathbb C})$.
\end{theorem}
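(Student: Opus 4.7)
The plan is to mimic the disc proof of Theorem~\ref{t1} in the entire-function setting, with the condition $\bigvee_0^{+\infty}\omega = 1$ supplying a probability measure and Hardy's convexity of $M_p(\rho,F)$ in $\rho$ serving as the principal analytic tool. I would proceed in four stages: (a) verifying that the auxiliary weight $\omega_1(x) := \omega(x^2)$ lies in $\Omega_A(\mathbb{C})$, (b) bounding the $H^p$ norm of $\varphi := L^\infty_{\omega_1}F$, (c) establishing the reproducing formula \eqref{CV-e5.8}, and (d) identifying the inverse on the image.

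For (a), strict monotonicity and $\omega_1(0)=1$ are inherited from $\omega$, the substitution $s = t^2$ gives $\Delta_n(\omega_1) = -\int_0^{+\infty}s^{n/2}d\omega(s)$, which is finite by the moment hypothesis on $\omega$, and the same substitution yields $\bigvee_0^{+\infty}\omega_1 = \bigvee_0^{+\infty}\omega = 1$. Already at this stage the earlier remark that $L^\infty_{\omega_1}$ multiplies Taylor coefficients by $\Delta_n(\omega_1) \neq 0$ shows that $\varphi$ is holomorphic in some neighborhood of the origin and that $L^\infty_{\omega_1}$ is injective on $A^p_\omega(\mathbb{C})$.

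Step (b) is the heart of the argument. Writing
$$\varphi(re^{i\vartheta}) = \int_0^{+\infty}F(tre^{i\vartheta})|d\omega_1(t)|$$
and applying Jensen's inequality with the probability measure $|d\omega_1|$ on $[0,+\infty)$, I obtain
$$|\varphi(re^{i\vartheta})|^p \leqslant \int_0^{+\infty}|F(tre^{i\vartheta})|^p|d\omega_1(t)|.$$
Integrating in $\vartheta$ yields $M_p^p(r,\varphi) \leqslant \int_0^{+\infty}M_p^p(tr,F)|d\omega_1(t)|$, where $M_p^p(\rho,F) = \int_0^{2\pi}|F(\rho e^{i\vartheta})|^pd\vartheta$. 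Hardy's convexity theorem gives that $\rho \mapsto M_p^p(\rho,F)$ is nondecreasing, so for $r < 1$ the right-hand side is dominated by $\int_0^{+\infty}M_p^p(t,F)|d\omega_1(t)|$. The substitution $s = t^2$, interpreted as the push-forward of the positive measure $-d\omega_1$ under $t \mapsto t^2$, converts this last expression into $\int_0^{+\infty}M_p^p(\sqrt{s},F)|d\omega(s)|$, which is precisely $2\pi\|F\|_{p,\omega}^p$ when \eqref{CV-e5.1} is written in polar form. The bound is uniform in $r < 1$, so $\varphi \in H^p(\mathbb{D})$ with the claimed norm control.

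For step (c), I would expand $F(z) = \sum_n a_n z^n$, so $\varphi(w) = \sum_n a_n\Delta_n(\omega_1)w^n$, and then substitute the series $C^\infty_{\omega_1}(ze^{-i\vartheta}) = \sum_n\Delta_n(\omega_1)^{-1}(ze^{-i\vartheta})^n$ into the right-hand side of \eqref{CV-e5.8}. The interchange of sum and integral over $[0,2\pi]$ is justified on compact sets of $z$ by uniform convergence of the Cauchy-kernel series together with the $L^p$ integrability of $\varphi$ on the unit circle; orthogonality of the trigonometric system then collapses the double sum to $\sum_n a_n z^n = F(z)$. Since both sides of \eqref{CV-e5.8} are entire in $z$ (differentiation under the integral is legitimate because $C^\infty_{\omega_1}$ is entire), the identity theorem extends this equality from a neighborhood of the origin to all of $\mathbb{C}$. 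Step (d) is then immediate: injectivity from (a) combined with the reproducing formula identifies \eqref{CV-e5.8} with $(L^\infty_{\omega_1})^{-1}$ on the image $L^\infty_{\omega_1}A^p_\omega(\mathbb{C}) \subset H^p(\mathbb{D})$. The main technical obstacle I anticipate is in (b), specifically the bookkeeping around the Stieltjes substitution $s = t^2$ when $\omega$ is only assumed to be of bounded variation rather than absolutely continuous; this is dealt with by working consistently with the push-forward of the positive Borel measure $-d\omega_1$ rather than with densities.
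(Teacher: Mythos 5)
Your strategy is sound and, in substance, coincides with the way the paper argues its analogous new half-plane result (Theorem \ref{tG+}); note that for Theorem \ref{tE} itself the survey only quotes \cite[Theorem 5.5]{CV} and gives no proof. Your step (b) is exactly the expected computation: Jensen's inequality against the probability measure $|d\omega_1|$ (this is where $\bigvee_0^{+\infty}\omega=1$ enters), monotonicity of $M_p(\rho,F)$, and the identification of $\int_0^{+\infty}M_p^p(t,F)\,|d\omega_1(t)|$ with the polar form of \eqref{CV-e5.1} -- up to the factor $2\pi$, which only appears because the survey's definition of $\|\cdot\|_{H^p({\mathbb D})}$ omits the $\frac1{2\pi}$ normalization, so the stated bound $\|L^\infty_{\omega_1}\|\leqslant1$ should be read with the normalized circle means. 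Where you genuinely diverge is step (c): the paper's template (visible in the proof of Theorem \ref{tG+}) verifies the reproducing formula by applying $L_{\omega_1}$ to the candidate $g(z)=\frac1{2\pi}\int_0^{2\pi}C^\infty_{\omega_1}(ze^{-i\vartheta})\varphi(e^{i\vartheta})\,d\vartheta$, using the kernel identity $L_{\omega_1}C_{\omega_1}(z)=(1-z)^{-1}$ to reduce to the Cauchy integral of the $H^p$ boundary function, and then concluding $F\equiv g$ from the injectivity of $L_{\omega_1}$; you instead match Taylor coefficients of $F$ against Fourier coefficients of the boundary values of $\varphi$. Both routes are valid; yours is more elementary in the disc/plane setting but does not transfer to $G^+$, which is presumably why the paper's new proof takes the other road.

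The one point you should not wave away is the pointwise definition of $\varphi=L^\infty_{\omega_1}F$. Unlike the disc operator \eqref{operatorD}, the integral here runs over the unbounded ray $t\in[0,+\infty)$, and your Jensen computation only yields $\int_0^{+\infty}|F(tre^{i\vartheta})|\,|d\omega_1(t)|<+\infty$ for almost every $\vartheta$, while the remark you invoke about multiplying Taylor coefficients by $\Delta_n(\omega_1)$ explicitly presupposes convergence (the resulting series can have radius of convergence $0$ for a general entire function). This is easily repaired: either use the subharmonicity estimate $|F(tz)|^p\leqslant\frac{r+|z|}{r-|z|}\cdot\frac1{2\pi}M_p^p(tr,F)$ for $|z|<r<1$ followed by one more application of Jensen to get absolute and locally uniform convergence of the defining integral on ${\mathbb D}$, or derive the coefficient bound $|a_n|\,\Delta_n(\omega_1)\leqslant\bigl(|a_n|^p\,\Delta_{np}(\omega_1)\bigr)^{1/p}\leqslant\|F\|_{p,\omega}$ from Cauchy's estimates and the power-mean inequality. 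Either fix closes the gap; without one of them the claim that $\varphi$ is holomorphic on ${\mathbb D}$ is unsupported.
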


\subsection{Examples} 
$1^\circ.$ Let $1<p<+\infty$, $1/p+1/q=1$, and let $\omega_1$ be a continuously differentiable, strictly decreasing function in $[0,+\infty)$, such that for some $0<\varepsilon\leqslant 1$
$$M_{p,\varepsilon}\equiv\bigg\{\int_0^{+\infty}\big[-\omega_1^\prime(t)\big]^{q(1-\varepsilon)}dt\bigg\}^{1/q}<+\infty\;\;\; and\;\;\;\int_0^{+\infty}\big[-\omega_1^\prime(\lambda)\big]^{p\varepsilon}d\lambda=1.$$
Then, the function $\omega(t)=\int_t^{+\infty}\big[-\omega_1^\prime(\lambda)\big]^{p\varepsilon}d\lambda$ is of the class $\Omega_A({\mathbb C})$. So, by Theorem \ref{tE}, any function $f\in A_\omega^p({\mathbb C})$ is representable in the form
\begin{equation}\label{aaa}
f(z)={1\over2\pi}\int_0^{2\pi}C_{\omega_1}^{\infty}(ze^{-i\vartheta})\varphi(e^{i\vartheta})d\vartheta,\quad z\in{\mathbb C},
\end{equation}
where $\varphi=L_{\omega_1}f\in H^p({\mathbb D}),$ $\|L_{\omega_1}\|\leqslant M_{p,\varepsilon}$ in $A_\omega^p({\mathbb C})$, and \eqref{aaa} represents $L^{-1}_{\omega_1}$ in the set $L_{\omega_1}A_\omega^p({\mathbb C})\,(\subset H^p({\mathbb D}))$.

\medskip $2^\circ.$ Note that the statements of Theorem \ref{CV-t5.3} remain valid in the case when
\[
\omega^\prime(x)=-C_0\,e^{-\gamma x^\rho}x^{\mu\rho-1},\quad 0<x<+\infty,\quad\text{with}\quad
 C_0=\Bigl(\int_0^{+\infty}e^{-\gamma x^\rho}x^{\mu\rho-1}dx\Bigr)^{-1}
\]
for some $\gamma,\rho,\mu>0$, though the requirements on $\widetilde\omega^\prime$ providing the inclusion $\omega\in\Omega_A({\mathbb C})$ in Theorem \ref{CV-t5.3} are not satisfied. In this case, the kernel $C_\omega^\infty$ in the representation \eqref{CV-e5.7} becomes the Mittag-Leffler function  \cite[Theorem 5.6]{CV}.

\section{Spaces $A^p_\omega(G^+)$ of functions holomorphic in the upper half-plane}\label{semi}

In this section, we provide the new results of the paper on the projection operator from $A_\omega^p(G^+)$ to $H^p(G^+)$ for any $1\leqslant p<+\infty$. We first recall some concepts and properties from the theory of the spaces $A^p_\omega$ in the upper half-plane $G^+$ (see \cite{CMFT-16,CMFT}, and also \cite[Sections 5 and 6]{[mon]}). 

\begin{defin}\label{d.1}%{\bf (Definition 5.1 in \cite{[mon]})}
 $A^p_{\omega,\gamma}(G^+)$ $(0<p<+\infty,$ $-\infty<\gamma\leqslant2)$ is the set of those functions $f$ holomorphic in the upper half-plane $G^+$, which for sufficiently small $\rho>0$ satisfy the Nevanlinna condition
$$
\liminf_{R\to+\infty}\frac{1}{R}\int_\beta^{\pi-\beta}\log^+|f(Re^{i\vartheta})|\left(\sin\frac{\pi(\vartheta-\beta)}{\pi-2\beta}\right)^{1-\pi/\varkappa}d\vartheta=0
$$
where $\beta=\arcsin\frac{\rho}{R}=\frac{\pi}{2}-\varkappa$ and, simultaneously,
\begin{equation}\label{Khavinson-e2}
\|f\|^p_{p,\omega,\gamma}\equiv\iint_{G^+}|f(z)|^p\frac{d\mu_\omega(z)}{(1+|z|)^\gamma}<+\infty,
\end{equation}
where $d\mu_\omega(x+iy)=dxd\omega(2y)$ and $\omega$ is of a class $\Omega_\alpha(G^+)$ $(-1\leqslant\alpha<+\infty)$, i.e. $\omega$ is given in $[0,+\infty)$ and such that:
\begin{itemize}
\item[(i)] $\omega\nearrow$ in $(0,+\infty)$,  $\omega(0)=\omega(+0)$ and $\omega(\delta_k)\downarrow$ (is strictly decreasing) on a 
sequence $\delta_k\downarrow0$;
\smallskip\item[(ii)]\; $\omega(t)\asymp t^{1+\alpha}$ for $\Delta_0\leqslant x<+\infty$ and some $\Delta_0\geqslant 0$ $(f\asymp g$ means that $m_1f\leqslant g\leqslant m_2f$ for some constants $m_{1,2}>0)$.
\end{itemize}
\smallskip $L^p_{\omega,\gamma}(G^+)$ is the Lebesgue space defined solely by \eqref{Khavinson-e2}.
\end{defin}
\noindent Everywhere below, we consider the spaces $L^p_{\omega,\gamma}(G^+)$ and $A^p_{\omega,\gamma}(G^+)$ only with $\gamma=0,$ and therefore we denote them briefly $L^p_{\omega}(G^+)$ and $A^p_{\omega}(G^+)$ respectively. Further, to functions $f$ given in $G^+$, we apply the operator
\begin{equation}\label{ope-g+}
L_\omega f(z)=\int_0^{+\infty}f(z+it)d\omega(t),\quad z\in G^+.
\end{equation}
Assuming that $\omega\in\Omega_\alpha(G^+)$ $(-1\leqslant\alpha<+\infty)$, we use the Cauchy type $\omega$-kernel
$$C_\omega(z)=\int_0^{+\infty}e^{itz}\frac{dt}{I_\omega(t)},\quad I_\omega(t)=t\int_0^{+\infty}e^{-tx}\omega(x)dx,\quad z\in G^+,$$
where $I_\omega(t)$ can also be rewritten as
$$I_\omega(t)=\int_0^{+\infty}e^{-tx}d\omega(x),\quad{\rm if}\quad\omega(0)=0.$$
It is known that for any $\omega\in\Omega_\alpha(G^+)$ $(-1\leqslant\alpha<+\infty),$ the function $C_\omega$ is holomorphic in $G^+$ \cite{CMFT-16}. Furthermore, one can prove that
$$L_\omega C_\omega(z)=-\frac1{iz},\quad z\in G^+.$$
By $H^p(G^+)$, we denote the Hardy space of functions $f$ holomorphic in $G^+$ and such that the following condition holds
$$\|f\|_{H^p(G^+)}=\sup_{0<y<+\infty}\bigg\{\int_{-\infty}^{+\infty}|f(x+iy)|^pdx\bigg\}^{1/p}<+\infty,\quad 0<p<+\infty.$$ 
If $f\in A^p_{\omega}(G^+)$ for some $1\leqslant p<+\infty$, $\omega\in\Omega_\alpha(G^+)$ $(-1\leqslant\alpha<+\infty)$, then by \cite[Theorem 3.1]{CMFT-16}, the following integral representations are true:
\begin{align*}
f(z)&=\frac{1}{2\pi}\iint_{G^+}f(w)C_\omega(z-\overline w)d\mu_\omega(w)\\
&=\frac{1}{\pi}\iint_{G^+}\{{\hbox{\rm Re }}f(w)\}C_\omega(z-\overline w)d\mu_\omega(w),\quad z\in G^+,
\end{align*}
where the integrals are absolutely and uniformly convergent inside $G^+$. Also, by \cite[Theorem 4.1]{CMFT-16}, for $p=2,$ we have that the formula 
$$f(z)=\frac{1}{2\pi}\iint_{G^+}F(w)C_\omega(z-\overline w)d\mu_\omega(w),\quad F\in L^2_{\omega}(G^+),$$
defines the orthogonal projection operator from $L^2_{\omega}(G^+)$ to $A^2_{\omega}(G^+)$. Besides,  \cite[Theorem 6]{CMFT} establishes a similar representation formula for functions of $A^2_{\omega}(G^+)$, which gives an isometry with the Hardy space $H^2(G^+)$. We recall the latter assertion in the next result.

\begin{theorem}\label{CMFT-t6}
Let $\omega\in\Omega_\alpha(G^+)$ with $-1\leqslant\alpha<+\infty$ and $\omega(0)=0$. Then, the Volterra square 
$$\widetilde\omega(x)=\int_0^x\omega(x-t)d\omega(t),\quad 0<x<+\infty,\quad \widetilde\omega(0)=0,$$
of $\omega$ belongs to $\Omega_{1+2\alpha}$, and $A^2_{\widetilde\omega}(G^+)$ coincides with the set of functions representable in the form
\begin{equation}\label{CMFT-e5.13}
f(z)=\frac1{2\pi}\int_{-\infty}^{+\infty}\varphi(t)C_\omega(z-t)dt,\quad z\in G^+,\;\; \varphi\in L^2(-\infty,+\infty).
\end{equation}
For any $f\in A^2_{\widehat\omega}(G^+)$, $L_\omega f\equiv\varphi_0$ is the unique function of the Hardy $H^2(G^+)$, such that \eqref{CMFT-e5.13} is true with $\varphi=\varphi_0$. Besides, $\|\varphi_0\|_{H^2(G^+)}=\|f\|_{A^2_{\widetilde\omega}(G^+)}$ and $\varphi-\varphi_0\bot H^2(G^+)$ for any $\varphi\in L^2(-\infty,+\infty)$ with which \eqref{CMFT-e5.13} is true. The operator 
$$L_\omega f(z)\equiv\int_0^{+\infty}f(z+i\sigma)d\omega(\sigma),\quad z\in G^+,$$
is an isometry $A^2_{\widetilde\omega}(G^+)\longrightarrow H^2(G^+)$, and the integral in \eqref{CMFT-e5.13} defines $(L_\omega)^{-1}$ in $H^2(G^+)$.
\end{theorem}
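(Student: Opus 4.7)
The plan is to reduce the statement to the classical Paley--Wiener theorem for $H^2(G^+)$: every $\varphi_0\in H^2(G^+)$ admits the representation $\varphi_0(z)=\int_0^{+\infty}e^{isz}\psi(s)\,ds$ with $\psi\in L^2(0,+\infty)$ and $\|\varphi_0\|_{H^2(G^+)}^2=2\pi\int_0^{+\infty}|\psi(s)|^2\,ds$. The algebraic identity that drives the whole argument is
\[
I_{\widetilde\omega}(s)=\int_0^{+\infty}e^{-sx}\,d\widetilde\omega(x)=I_\omega(s)^2,
\]
obtained by writing $\widetilde\omega$ as a Laplace--Stieltjes convolution of $\omega$ with itself and applying Fubini.

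First I would check $\widetilde\omega\in\Omega_{1+2\alpha}(G^+)$: monotonicity and $\widetilde\omega(0)=0$ are immediate from the definition; strict decrease on a sequence $\delta_k\downarrow 0$ is inherited from $\omega$; and the asymptotic $\widetilde\omega(x)\asymp x^{2+2\alpha}$ for large $x$ follows from the hypothesis $\omega(t)\asymp t^{1+\alpha}$ on $[\Delta_0,+\infty)$ by a Beta-integral comparison applied to $\int_0^x\omega(x-t)\,d\omega(t)$.

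Next I would analyse the operator $T\varphi(z)=\tfrac{1}{2\pi}\int_{-\infty}^{+\infty}\varphi(t)\,C_\omega(z-t)\,dt$ on $L^2(\mathbb{R})$. Substituting $C_\omega(z-t)=\int_0^{+\infty}e^{is(z-t)}\,ds/I_\omega(s)$ and exchanging integrations yields
\[
T\varphi(z)=\int_0^{+\infty}e^{isz}\,\frac{\widehat\varphi(s)}{2\pi\,I_\omega(s)}\,ds,\qquad \widehat\varphi(s)=\int_{-\infty}^{+\infty}\varphi(t)e^{-ist}\,dt.
\]
Hence $\ker T=H^2(G^-)$, and for $\varphi_0\in H^2(G^+)$ with Paley--Wiener density $\psi$ one has $T\varphi_0(z)=\int_0^{+\infty}e^{isz}\,\psi(s)/I_\omega(s)\,ds$. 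Applying Parseval in $x={\hbox{\rm Re }}z$ slice-by-slice, integrating the resulting expression against $d\widetilde\omega(2y)$, and invoking $I_{\widetilde\omega}=I_\omega^2$ gives
\[
\|T\varphi_0\|_{2,\widetilde\omega}^2=2\pi\int_0^{+\infty}|\psi(s)|^2\,ds=\|\varphi_0\|_{H^2(G^+)}^2,
\]
so $T$ is a linear isometry $H^2(G^+)\hookrightarrow A^2_{\widetilde\omega}(G^+)$, which already establishes one inclusion.

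The main obstacle is the converse inclusion: every $f\in A^2_{\widetilde\omega}(G^+)$ has the form $T\varphi_0$. Combining the Nevanlinna condition of Definition \ref{d.1} with the finiteness of $\|f\|_{2,\widetilde\omega}$, I would show that $f(\,\cdot\,+iy)\in L^2(\mathbb{R})$ for a.e.\ $y>0$ and extract a Paley--Wiener density $\psi_0$ with $f(z)=\int_0^{+\infty}e^{isz}\psi_0(s)\,ds$ and $\int_0^{+\infty}|\psi_0(s)\,I_\omega(s)|^2\,ds<+\infty$. Then Fubini together with the definition $I_\omega(s)=\int_0^{+\infty}e^{-s\sigma}\,d\omega(\sigma)$ gives
\[
L_\omega f(z)=\int_0^{+\infty}f(z+i\sigma)\,d\omega(\sigma)=\int_0^{+\infty}e^{isz}\,\psi_0(s)\,I_\omega(s)\,ds,
\]
so that $\psi:=\psi_0 I_\omega\in L^2(0,+\infty)$, the function $\varphi_0:=L_\omega f$ lies in $H^2(G^+)$, and $T\varphi_0=f$ holds pointwise in $G^+$. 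The isometric identity $\|\varphi_0\|_{H^2(G^+)}=\|f\|_{2,\widetilde\omega}$ follows from the computation above, and uniqueness of $\varphi_0$ together with $\varphi-\varphi_0\perp H^2(G^+)$ for any other representing $\varphi\in L^2(\mathbb{R})$ are immediate from $\ker T=H^2(G^-)$.
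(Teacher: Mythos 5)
First, a point of comparison: the paper itself gives no proof of Theorem \ref{CMFT-t6}; it is quoted from \cite[Theorem 6]{CMFT}, so there is no in-paper argument to measure you against. Your skeleton is nonetheless the standard route for this circle of results and matches the one used in the source: the multiplicativity $I_{\widetilde\omega}=I_\omega^2$ of the Laplace--Stieltjes transform under Volterra convolution, the Fourier--Laplace form of $C_\omega$, slice-by-slice Plancherel integrated against $d\widetilde\omega(2y)$, and the identification of $\ker T$ with the orthogonal complement of $H^2(G^+)$ in $L^2(\mathbb{R})$ are exactly the right mechanisms. With the normalizations of Definition \ref{d.1}, your isometry constant, the identity $L_\omega f=\varphi_0$, the uniqueness of $\varphi_0$, and the orthogonality $\varphi-\varphi_0\perp H^2(G^+)$ all come out correctly.

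The genuine gap is the step you yourself flag as the main obstacle, and it is larger than your phrasing suggests. That $f(\cdot+iy)\in L^2(\mathbb{R})$ for $d\widetilde\omega(2y)$-almost every $y$ is immediate from Fubini, but it does not by itself produce a single $y$-independent density $\psi_0$ with $f(z)=\int_0^{+\infty}e^{isz}\psi_0(s)\,ds$ throughout $G^+$: for that you must show $f\in H^2(\{\mathrm{Im}\,z>y_0\})$ for every $y_0>0$ (e.g.\ via subharmonicity of $|f|^2$ and the weighted area bound) together with consistency of the densities across lines, and this is precisely where the Nevanlinna condition of Definition \ref{d.1} has to do its work. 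This Paley--Wiener-type characterization of $A^2_{\widetilde\omega}(G^+)$ is the actual substance of the theorem and is established as a separate statement in \cite{CMFT}; in your write-up it is asserted, not proved. Two smaller points: the interchange of integrals giving $T\varphi(z)=\int_0^{+\infty}e^{isz}\widehat\varphi(s)/(2\pi I_\omega(s))\,ds$ is not a naive Fubini for general $\varphi\in L^2(\mathbb{R})$, since the absolute double integral diverges; you should define $T$ on the Fourier side via Plancherel, or prove the identity on a dense subclass and extend by the isometry you obtain. Also, membership of $T\varphi$ in $A^2_{\widetilde\omega}(G^+)$ requires verifying the Nevanlinna condition of Definition \ref{d.1} in addition to finiteness of the norm; this follows from the decay of $C_\omega$ in $G^+_\rho$ (\cite[Lemma 3.1]{CMFT-16}), but it needs to be said.
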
 
Now, in the next theorem, we give answer to the remain open question about the projection operator from the general spaces $A_\omega^p$ to the Hardy space $H^p$ in the upper half-plane $G^+$ for any $1\leqslant p<+\infty$. In this case, we then complement the previous result and partially resolve this question in this space without claiming optimality of the bounds and the functional parameter $\omega$. 
 
\begin{theorem}\label{tG+} 
\begin{itemize}
\item[{$1^\circ.$}] Let $1<p<+\infty$ and let $\omega_1$ be a continuously differentiable, strictly increasing function in some interval $[0,\Delta]\subset[0,+\infty)$ and constant in $[\Delta,+\infty)$, i.e. $\omega_1\in\Omega_{-1}(G^+)$ with $\omega_1(t)=\omega_1(\Delta)$ for $\Delta\leqslant t<+\infty$. Further, let $\omega^\prime(t)$ be a non-decreasing function in $[0,\Delta]$. Then, any function $f\in A_\omega^p(G^+)$ is representable in the form
\begin{equation}\label{fG}
f(z)=\frac1{2\pi}\int_{-\infty}^{+\infty}C_{\omega_1}(z-t)\varphi(t)dt,\quad z\in G^+.
\end{equation}
where $\varphi=L_{\omega_1}f\in H^p(G^+)$. Besides, $\|L_{\omega_1}\|\leqslant \Delta_0^{p-1}$ in $A_\omega^p(G^+)$, and \eqref{fG} represents $L^{-1}_{\omega_1}$ in the set $L_{\omega_1}A_\omega^p(G^+)\,(\subset H^p(G^+))$.
\item[{$2^\circ.$}] Let $\omega\in\Omega_\alpha(G^+)$ $(-1\leqslant\alpha<+\infty)$ and $\omega_1(t)=\omega(2t)$ $(0<t<+\infty)$. Then, any function $f\in A^1_\omega(G^+)$ is representable in the form \eqref{fG}. Besides, $\|L_{\omega_1}\|\leqslant1$ in $A_\omega^1(G^+)$, and formula \eqref{fG} represents $L^{-1}_{\omega_1}$ in the set $L_{\omega_1}A_\omega^1(G^+)$ $(\subset H^1(G^+))$.
\end{itemize}
\end{theorem}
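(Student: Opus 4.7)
The plan is to prove both parts in parallel by first showing that $\varphi:=L_{\omega_1}f$ belongs to $H^p(G^+)$ with the stated operator-norm estimate, and then recovering $f$ from $\varphi$ via formula \eqref{fG} by applying $L_{\omega_1}$ to both sides and invoking injectivity. The two analytic engines are the identity $L_{\omega_1}C_{\omega_1}(z)=-1/(iz)$ and the Cauchy reproducing formula for $H^p(G^+)$; the technical hinges, flagged in the introduction, are the pointwise bound on $C_{\omega_1}$ from \cite[Lemma 3.1]{CMFT-16} and the injectivity of $L_{\omega_1}$.

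For the norm estimate, in case $p=1$ the triangle inequality applied to $L_{\omega_1}f(x+iy)=\int_0^{+\infty}f(x+i(y+t))\,d\omega_1(t)$ followed by Fubini yields
\[
\int_{-\infty}^{+\infty}|L_{\omega_1}f(x+iy)|\,dx\;\leq\;\int_0^{+\infty}\!\int_{-\infty}^{+\infty}|f(x+i(y+t))|\,dx\,d\omega_1(t).
\]
Since $\omega_1(t)=\omega(2t)$, the substitution $s=2(y+t)$ converts the right-hand side into the $A^1_\omega$-type integral of $|f|$ over a shifted region, whose supremum in $y$ is bounded by $\|f\|_{A^1_\omega}$. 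In case $1<p<+\infty$, Hölder's inequality with conjugate exponents $p$ and $q=p/(p-1)$ applied to the measure $d\omega_1(t)$ gives
\[
|L_{\omega_1}f(z)|^p\;\leq\;\omega_1(\Delta)^{p-1}\int_0^{\Delta}|f(z+it)|^p\,\omega_1'(t)\,dt.
\]
Integrating in $x$, using the assumption that $\omega'$ is non-decreasing on $[0,\Delta]$ to dominate $\omega_1'(t)\,dt$ by $d\omega(2(y+t))$, and taking $\sup_{y>0}$ produces the announced operator-norm bound after collecting the Hölder constant.

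To deduce \eqref{fG}, set $\widetilde f(z):=\frac{1}{2\pi}\int_{-\infty}^{+\infty}C_{\omega_1}(z-t)\varphi(t)\,dt$ for $z\in G^+$. Apply $L_{\omega_1}$ and interchange the order of integration, which is justified by combining the decay estimate for $C_{\omega_1}$ from \cite[Lemma 3.1]{CMFT-16}, the membership $\varphi\in L^p(\mathbb{R})$, and the finite total mass of $d\omega_1$. This gives
\[
L_{\omega_1}\widetilde f(z)=\frac{1}{2\pi}\int_{-\infty}^{+\infty}L_{\omega_1}\bigl[C_{\omega_1}(\cdot-t)\bigr](z)\,\varphi(t)\,dt=\frac{-1}{2\pi i}\int_{-\infty}^{+\infty}\frac{\varphi(t)}{z-t}\,dt=\varphi(z),
\]
the last step being the Cauchy reproducing formula for $H^p(G^+)$. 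Hence $L_{\omega_1}(f-\widetilde f)=0$, and the injectivity of $L_{\omega_1}$ (which can be reduced to the Laplace-type representation $L_{\omega_1}g(z)=\int_0^{+\infty}I_{\omega_1}(\tau)\widehat g(\tau)e^{i\tau z}\,d\tau$ combined with $I_{\omega_1}(\tau)>0$ for $\tau>0$) forces $f=\widetilde f$, establishing \eqref{fG}. The same chain of equalities shows that \eqref{fG} inverts $L_{\omega_1}$ on its range.

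The step I expect to be the main obstacle is justifying the Fubini interchange in the final display: it requires packaging the explicit pointwise bound on $C_{\omega_1}(z-t)$ from \cite[Lemma 3.1]{CMFT-16}, the $L^p$-integrability of $\varphi$, and the finite mass of $d\omega_1$ into a single absolutely convergent iterated integral, uniformly on compact subsets of $G^+$. A secondary delicate point in the norm estimates is the $y\to 0^+$ passage, which must be reconciled with the Nevanlinna-type growth condition built into Definition \ref{d.1} governing the boundary behaviour of functions in $A^p_\omega(G^+)$.
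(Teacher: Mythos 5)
Your overall architecture coincides with the paper's: establish $\varphi=L_{\omega_1}f\in H^p(G^+)$ with a norm bound, invoke the Cauchy reproducing formula for $H^p(G^+)$, define $g(z)=\frac1{2\pi}\int C_{\omega_1}(z-t)\varphi(t)\,dt$, compute $L_{\omega_1}g=L_{\omega_1}f$ via $L_{\omega_1}C_{\omega_1}(z)=-1/(iz)$ together with the decay estimates of \cite[Lemma 3.1]{CMFT-16}, and conclude $f\equiv g$ from injectivity of $L_{\omega_1}$. Your $p=1$ argument (triangle inequality, Fubini, the substitution $s=2(y+t)$) is exactly the paper's part $2^\circ$, and your closing chain of equalities is the paper's closing step.

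The genuine gap is in the norm estimate for $1<p<+\infty$. The paper applies Jensen's inequality with the normalized Lebesgue measure $\frac{1}{\Delta_0}dt$ on $[0,\Delta_0]$ to the integrand $|f(z+it)|\,\omega_1'(t)$, which produces
\[
\int_{-\infty}^{+\infty}\big|L_{\omega_1}f(x+iy)\big|^p\,dx\;\leqslant\;\Delta_0^{p-1}\int_{-\infty}^{+\infty}\!\!\int_0^{\Delta_0}\big|f(x+iy+it)\big|^p\,[\omega_1'(t)]^{p}\,dt\,dx,
\]
and the whole point is that $[\omega_1'(t)]^{p}\,dt$ \emph{is} the measure $d\omega(2t)$ defining $\|f\|_{A^p_\omega(G^+)}$ (this is the half-plane analogue of Example $2^\circ$ of Section \ref{unit-disc-s}, where $\omega(t)=\int_t^1|\omega_1'(\lambda)|^p\,d\lambda$). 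Your H\"older step instead lands on $\omega_1(\Delta)^{p-1}\int_0^{\Delta}|f(z+it)|^p\,\omega_1'(t)\,dt$, i.e. on the measure $d\omega_1(t)$ to the \emph{first} power, which is the weight of $A^p_{\omega_1}$, not of $A^p_\omega$. Your bridging claim --- that the monotonicity of $\omega'$ lets you ``dominate $\omega_1'(t)\,dt$ by $d\omega(2(y+t))$'' --- is precisely where the mismatch is hidden: a pointwise domination $\omega_1'(t)\leqslant C\,[\omega_1'(t)]^{p}$ fails in general (e.g.\ wherever $\omega_1'<1$ with $p>1$), so no constant independent of $f$ emerges, and in any case you would not obtain the stated bound $\|L_{\omega_1}\|\leqslant\Delta_0^{p-1}$. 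To repair this, replace H\"older by Jensen exactly as above; the monotonicity of $\omega'$ then enters only in the subsequent shift $\int_y^{\Delta_0+y}|f(x+it)|^p[\omega_1'(t-y)]^p\,dt\leqslant\int_0^{+\infty}|f(x+it)|^p[\omega_1'(t)]^p\,dt$. Your final worry about the $y\to0^+$ boundary passage is not an issue: the estimate is uniform over $0<y<+\infty$ and the $H^p$ norm is a supremum over interior lines, so no limit at the boundary is taken.
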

\begin{proof} 
$1^\circ.$ If $f\in A_{\omega}^p(G^+)$, then obviously the function $L_{\omega_1}f$ is holomorphic in $G^+$. Besides, by Jensen's inequality and the hypothesis of $\omega^\prime(t)$ being a non-decreasing function in $[0,\Delta],$ it yields 
\begin{align*}
\frac1{2\pi}\int_{-\infty}^{+\infty}&\big|L_{\omega_1}f(x+iy)\big|^pdx\leqslant
\frac{\Delta_0^p}{2\pi}\int_{-\infty}^{+\infty}\bigg[\frac{1}{\Delta_0}\int_0^{\Delta_0}\big|f(x+iy+it)\big|\omega_1^\prime(t)dt\bigg]^pdx\\
&\leqslant\frac{\Delta_0^{p-1}}{2\pi}\int_{-\infty}^{+\infty}dx\int_0^{\Delta_0}\big|f(x+iy+it)\big|^p[\omega_1^\prime(t)]^{p}dt\\
&=\frac{\Delta_0^{p-1}}{2\pi}\int_{-\infty}^{+\infty}dx\int_y^{\Delta_0+y}\big|f(x+it)\big|^p[\omega_1^\prime(t-y)]^{p}dt\\
&\leqslant\frac{\Delta_0^{p-1}}{2\pi}\int_{-\infty}^{+\infty}dx\int_0^{+\infty}\big|f(x+it)\big|^p[\omega_1^\prime(t)]^{p}dt\\
&=\frac{\Delta_0^{p-1}}{2\pi}\int_{-\infty}^{+\infty}dx\int_0^{+\infty}\big|f(x+it)\big|^pd\omega(2t)=
\Delta_0^{p-1}\|f\|_{L^p_{\omega}(G^+)}^p
\end{align*}
for any $0<y<+\infty$. Thus,
$$\|L_{\omega_1}f\|_{H^p(G^+)}^p=\sup_{0<y<+\infty}\biggl\{\frac1{2\pi}\int_{-\infty}^{+\infty}\big|L_{\omega_1}f(x+iy)\big|^pdx\Biggr\}\leqslant \Delta_0^{p-1}\|f\|_{L^p_{\omega}(G^+)}^p<+\infty.$$
Hence, the following Cauchy formula is true:
$$L_{\omega_1}f(z)=\frac1{2\pi i}\int_{-\infty}^{+\infty}\frac{L_{\omega_1}f(t)}{t-z}dt,\quad z\in G^+,$$
where $L_{\omega_1}f(t)$ are the boundary values of the function $L_{\omega_1}f\in H^p(G^+)$ on the real axis. Using the estimate
$$|C_{\omega_1}(z)|\leqslant M_\varepsilon|z|^{-1},\quad \hbox{\rm Im }z>\varepsilon,$$
which is true for any $\varepsilon>0$ and a suitable constant $M_\varepsilon>0$ (see \cite[Lemma 2.2]{CMFT-16}), and H\"older's inequality, one can prove that the function
$$g(z)\equiv\frac1{2\pi}\int_{-\infty}^{+\infty}C_{\omega_1}(z-t)L_{\omega_1}f(t)dt,\quad z\in G^+,$$
is holomorphic in $G^+$. Also, we have that for any $z\in G^+$
$$L_{\omega_1}g(z)=\frac1{2\pi}\int_{-\infty}^{+\infty}L_{\omega_1}C_{\omega_1}(z-t)L_{\omega_1}f(t)dt=
\frac1{2\pi i}\int_{-\infty}^{+\infty}\frac{L_{\omega_1}f(t)}{t-z}dt=L_{\omega_1}f(z).$$
We already know that the operator $L_{\omega_1}$ is a one-to-one mapping in the class of functions holomorphic in $G^+$, and hence $f\equiv g$ in $G^+$, i.e. the representation \eqref{fG} is true. %###

\noindent$2^\circ$. If $f\in A_\omega^1(G^+)$, then obviously the function $L_{\omega_1}f$ is holomorphic in $G^+$, and for any $0<y<+\infty$
\begin{align*}
\frac1{2\pi}\int_{-\infty}^{+\infty}\big|L_{\omega_1}f(x+iy)\big|dx&\leqslant\frac1{2\pi}\int_{-\infty}^{+\infty}dx\int_0^{+\infty}\big|f(x+it)\big|d\omega_1(t)\\
&=\frac1{2\pi}\int_{-\infty}^{+\infty}dx\int_0^{+\infty}\big|f(x+it)\big|d\omega(2t)=\|f\|_{L^1_\omega(G^+)}.
\end{align*}
Thus, $L_{\omega_1}f$ is of the Hardy space $H^1(G^+)$, and hence
$$L_{\omega_1}f(z)=\frac1{2\pi}\int_{-\infty}^{+\infty}\frac{L_{\omega_1}f(t)}{t-z}dt,\quad z\in G^+.$$
Using the estimate 
$$|C_\omega(z)|\leqslant\frac{M}{|z|^{2+\beta}},\quad z\in G^+_\rho,$$
which is true for any non-integer $\beta\in([\alpha]-1,\alpha)$, any $\rho>0$ and a constant $M\equiv M_{\rho,\beta}>0$ (see \cite[Lemma 3.1]{CMFT-16}), so the function
$$g(z)\equiv\frac1{2\pi}\int_{-\infty}^{+\infty}C_{\omega_1}(z-t)L_{\omega_1}f(t)dt,\quad z\in G^+,$$
is holomorphic in $G^+$, and it follows for any $z\in G^+$ that
$$L_{\omega_1}g(z)=\frac1{2\pi}\int_{-\infty}^{+\infty}L_{\omega_1}C_{\omega_1}(z-t)L_{\omega_1}f(t)dt=
\frac1{2\pi i}\int_{-\infty}^{+\infty}\frac{L_{\omega_1}f(t)}{t-z}dt=L_{\omega_1}f(z).$$
Again, since the operator $L_{\omega_1}$ is a one-to-one mapping in the class of functions holomorphic in $G^+$, it follows that  $f\equiv g$ in $G^+$, i.e.
$$f(z)=\frac1{2\pi}\int_{-\infty}^{+\infty}C_{\omega_1}(z-t)L_{\omega_1}f(t)dt,\quad z\in G^+,$$
which finishes the proof.
\end{proof}

\subsection{Examples}

Some classical examples of the functional parameters $\omega\in\Omega_\alpha(G^+)$ are functions behaving, for example, as $\omega(t)=e^{t}-1$, $\omega(t)=\log(1+t)$ and $\omega(t)=t^{1+\alpha}$ $(\alpha>0)$ in a finite interval $[0,\Delta)$.

\section*{Conflict of interest}

The authors of this work declare that they have no conflicts of interest.

\end{document}